\documentclass{article}
\usepackage{amsmath,color,amsfonts,amsthm}
\usepackage{amssymb,enumerate,enumitem,verbatim}
\usepackage{hyperref}

\newtheorem{lemma}{Lemma}[section]

\newtheorem{theorem}[lemma]{Theorem}
\newtheorem{prop}[lemma]{Proposition}
\newtheorem{defn}[lemma]{Definition}

\addtolength{\topmargin}{-.3in}
\addtolength{\textwidth}{1.05in}
\addtolength{\hoffset}{-0.4in}
\addtolength{\textheight}{1.05in}
\addtolength{\voffset}{-0.45in}

\global\long\def\a{\alpha}
\global\long\def\e{\varepsilon}

\global\long\def\N{\mathbb{N}}

\title{Rainbow spanning trees in properly coloured complete graphs}

 \author{
 	J\'ozsef Balogh\thanks{Department of Mathematical Sciences,
 		University of Illinois at Urbana-Champaign, Urbana, Illinois 61801, USA and Trinity College, Cambridge, CB2 1TQ, UK. Email: {\tt jobal@math.uiuc.edu}. Research partially supported by NSF Grant DMS-1500121, Arnold O.~Beckman Research Award (UIUC Campus Research Board 15006) and by the Langan Scholar Fund (UIUC).
 	}
 	, 
 	\quad\quad
 	Hong Liu\thanks{Mathematics Institute and DIMAP, University of Warwick, Coventry, CV4 7AL, UK.  Email: {\tt h.liu.9@warwick.ac.uk}. Research supported by a Leverhulme Trust Early Career Fellowship~ECF-2016-523.}
 	\quad and \quad
 	Richard Montgomery\thanks{Trinity College, Cambridge, CB2 1TQ, UK. Email: {\tt r.h.montgomery@dpmms.cam.ac.uk}.}
 }

\begin{document}

\maketitle
\begin{abstract}
	In this short note, we study pairwise edge-disjoint rainbow spanning trees in properly edge-coloured complete graphs, where a graph is rainbow if its edges have distinct colours. Brualdi and Hollingsworth conjectured that every $K_n$ properly edge-coloured by $n-1$ colours has $n/2$ edge-disjoint rainbow spanning trees. Kaneko, Kano and Suzuki later suggested this should hold for every properly edge-coloured $K_n$. Improving the previous best known bound, we show that every properly edge-coloured $K_n$ contains $\Omega(n)$ pairwise edge-disjoint rainbow spanning trees.
	
	%Independently, and using different methods, Pokrovskiy and Sudakov recently proved the same result.
	Independently, Pokrovskiy and Sudakov recently proved that every properly edge-coloured $K_n$ contains $\Omega(n)$ isomorphic pairwise edge-disjoint rainbow spanning trees.
\end{abstract}

\section{Introduction}\label{intro}
Given a properly edge-coloured $K_n$, a \emph{rainbow spanning tree} is a tree with vertex set $V(K_n)$ whose edges have distinct colours. Each properly edge-coloured $K_n$ clearly contains many rainbow spanning trees -- for example, any $(n-1)$-edge star in $K_n$ is such a tree. How many \emph{edge-disjoint} rainbow spanning trees can we find in any properly edge-coloured $K_n$? Brualdi and Hollingsworth~\cite{BH} conjectured that every $K_n$ properly edge-coloured by $n-1$ colours contains $n/2$ edge-disjoint rainbow spanning trees (see also Constantine~\cite{const}). Note that, in such a colouring, each colour appears on exactly $n/2$ edges to form a monochromatic perfect matching.  Brualdi and Hollingsworth~\cite{BH} showed that there are at least two edge-disjoint rainbow spanning trees in any $K_n$ properly edge-coloured by $n-1$ colours. 

Kaneko, Kano and Suzuki~\cite{KKS} expanded the Brualdi-Hollingsworth conjecture, suggesting that any properly edge-coloured $K_n$ (using any number of colours) should contain $n/2$ edge-disjoint rainbow spanning trees. In such graphs, Kaneko, Kano and Suzuki~\cite{KKS} showed that there are at least three edge-disjoint rainbow spanning trees. Akbari and Alipour~\cite{AA} studied edge-disjoint rainbow spanning trees using weaker conditions still, showing that any edge-coloured $K_n$, where each colour is only constrained to appear at most $n/2$ times, contains at least two edge-disjoint rainbow spanning trees.

Recent progress has seen far more edge-disjoint rainbow spanning trees found in edge-coloured complete graphs. For some small $\e>0$, Horn~\cite{Horn} proved that every properly $(n-1)$-edge-coloured $K_n$ contains at least $\e n$ edge-disjoint rainbow spanning trees. Carraher, Hartke and Horn~\cite{CHH} showed that every edge-coloured $K_n$ where each colour appears at most $n/2$ times contains at least $\lfloor n/(1000\log n)\rfloor$ edge-disjoint rainbow spanning trees. Here, we consider the intermediate of these conditions, and show that any properly edge-coloured $K_n$ contains linearly many edge-disjoint rainbow spanning trees.

\begin{theorem}\label{rainbowmanytrees}
	Every properly edge-coloured $K_n$ contains at least $n/10^{12}$ pairwise edge-disjoint rainbow spanning trees.
\end{theorem}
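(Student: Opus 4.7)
My plan is to prove Theorem~\ref{rainbowmanytrees} by iteratively constructing the $t := \lfloor n/10^{12}\rfloor$ rainbow spanning trees one at a time. Suppose $T_1,\ldots,T_k$ have already been constructed pairwise edge-disjointly, and set $H_k := K_n \setminus \bigcup_{i \le k} E(T_i)$; since each $T_i$ has $n-1$ edges, at most $k(n-1) \le n^2/10^{12}$ edges of $K_n$ have been removed. The theorem therefore reduces to the following core statement: every spanning subgraph $H$ of a properly edge-coloured $K_n$ with $|E(K_n)\setminus E(H)| \le n^2/10^{12}$ and minimum degree at least $n/2$ contains a rainbow spanning tree, and moreover one with maximum degree bounded by some absolute constant $D$. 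The bounded-maximum-degree conclusion is essential for the iteration: if each $T_i$ has $\Delta(T_i) \le D$, then every vertex has lost at most $Dk$ incident edges, so $\delta(H_k) \ge n - 1 - Dn/10^{12} \ge n/2$ for large $n$, preserving both hypotheses across iterations.

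To prove the core statement, I would grow a rainbow forest $F \subseteq H$ greedily, aiming to reduce its number of components by one per step. The easy case is when there is an edge $e \in E(H)\setminus E(F)$ joining two components of $F$ with $c(e) \notin c(F)$; then we just add $e$. When no such $e$ exists (every cross-component edge of $H$ carries a colour already present in $F$), I would perform a \emph{colour switch}: pick such a cross edge $e$ of colour $c$, find the unique $f \in F$ with $c(f) = c$, and replace $f$ by $e$. A single swap may not immediately reduce the component count, but by chaining swaps into an alternating sequence in the auxiliary bipartite graph between available cross edges and forest edges of matching colours, one should obtain a net reduction of one component per iteration. The density of $H$ combined with the fact that each colour class of $K_n$ is a matching of size at most $n/2$ should provide enough augmenting structure.

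The main obstacle I anticipate is the end of the procedure, when $F$ has very few components and the pool of usable cross-edges is correspondingly small; the augmenting argument is most delicate there. A standard remedy is to set aside at the start a small absorbing gadget, for example a random rainbow substructure guaranteed to be able to merge any small handful of remaining components, and use it only in the final phase. A parallel difficulty is maintaining the bounded-maximum-degree property throughout, since swaps can redistribute edges unfavourably; this should be tractable by biasing each swap toward cross edges incident to currently low-degree vertices of $F$. Once these are handled, the constants involved are generous enough to leave substantial slack inside the $10^{12}$ factor of the theorem.
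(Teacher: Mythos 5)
Your reduction has a genuine gap, and its key invariant is in fact false as stated. You claim: every spanning subgraph $H$ of a properly coloured $K_n$ with at most $n^2/10^{12}$ edges deleted and $\delta(H)\ge n/2$ has a rainbow spanning tree. Take a $1$-factorization of $K_n$ (a proper colouring with exactly $n-1$ colours, each a perfect matching) and delete all $n/2$ edges of one colour class: only $n/2$ edges are gone, the minimum degree is $n-2$, yet only $n-2$ colours remain, so no rainbow spanning tree exists. So minimum degree plus a bound on the number of deleted edges is not a sufficient invariant; you must also control which \emph{colours} the removed trees consume. Nor does restricting the deletions to rainbow trees rescue the plan on its own: if one matching of a $1$-factorization is split into roughly $5\cdot 10^{11}$ new small colour classes (so the total number of colours exceeds $n-1$ by a constant), then $n/10^{12}$ unguided rainbow spanning trees can between them use up every edge of those small classes, after which only $n-2$ colours survive and the iteration is stuck before reaching the required count. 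This colour bookkeeping is exactly what your proposal omits and what the paper's structure exists for: Lemma~\ref{lem-part} splits into the case of many rich colours versus a robustly well-coloured graph, and in the first case Lemma~\ref{embed-many-orig-rich} \emph{reserves} edge-disjoint rainbow matchings on the non-rich colours (one per future tree, via Lemma~\ref{findmatch2}) so that the scarce colours cannot be exhausted, with each tree then completed through the rich colours using the Schrijver--Suzuki criterion (Theorem~\ref{rain}).

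A second, smaller issue: you lean on producing each tree with \emph{constant} maximum degree $D$, but you give no argument for the existence of constant-degree rainbow spanning trees in an arbitrary properly coloured (sub)graph, and this is itself a strong demand (for small $D$ it is in the territory of Andersen-type conjectures on rainbow Hamilton paths). The paper avoids needing it: its trees only satisfy $\Delta(T_i)\le (1-\alpha/6)n$, and the degradation of the minimum degree is instead controlled by forcing every vertex that has already accumulated degree at least $\alpha n/12$ in the union of previous trees to appear as a leaf in all later trees (the set $A$ in Lemma~\ref{treecor} and the rules in Lemma~\ref{wellcoll2}). Your forest-growing-with-colour-swaps subroutine is also left at the level of a sketch (the ``chaining swaps'' and ``absorbing gadget'' steps are not justified), whereas the paper gets each tree in one stroke from Theorem~\ref{rain} applied to a well-coloured, high-minimum-degree subgraph. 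To repair your approach you would need, at minimum, to add an explicit mechanism protecting the rare colours across iterations and to replace the constant-degree requirement by a weaker degree-accumulation control of the kind just described.
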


We note that our methods are much shorter than those previously capable of finding many edge-disjoint rainbow spanning trees. In particular, Section~\ref{sec-welcor} alone shows that any properly $(n-1)$-edge-coloured $K_n$ contains linearly many edge-disjoint rainbow spanning trees.

Essentially, to prove Theorem~\ref{rainbowmanytrees}, we iteratively remove rainbow spanning trees from $K_n$ while ensuring that the remaining minimum degree does not decrease too much and that we do not use up too many colours. We have two cases, depending on the colouring of $K_n$.
We show (in Lemma~\ref{lem-part}) that every properly edge-coloured $K_n$ either contains many colours which appear on linearly many edges or its colours can be grouped into classes which play a similar role. We use different embedding strategies for these cases (in Lemmas~\ref{wellcol} and~\ref{embed-many-orig-rich}).

\medskip

While finishing our work, we discovered that, using different methods, Pokrovskiy and Sudakov~\cite{PS} recently proved that every properly edge-coloured $K_n$ contains at least $n/10^{6}$ edge-disjoint rainbow copies of a certain spanning tree with radius 2. %Note that the methods used are different.

\medskip

\noindent\textbf{Organisation:} The rest of the paper is organised as follows. In Section~\ref{sec-pre}, we present our main lemmas and show that they imply Theorem~\ref{rainbowmanytrees}. These lemmas, Lemmas~\ref{lem-part},~\ref{wellcol} and~\ref{embed-many-orig-rich}, are proved in Sections~\ref{sec-part},~\ref{sec-welcor} and~\ref{sec-almost} respectively.
%%%%%%%%%%%%%%%%%%%%%%%%%%%%%%%%%%%%%%%%%%%%%%%%%%%%%%%%%%%%%%%%%%%%%%%%%%%%%%%%%%%%%%%%%%%%%%%%%%%%%%%
%%%%%%%%%%%%%%%%%%%%%%%%%%%%%%%%%%%%%%%%%%%%%%%%%%%%%%%%%%%%%%%%%%%%%%%%%%%%%%%%%%%%%%%%%%%%%%%%%%%%%%%

\section{Preliminaries}\label{sec-pre}

\noindent\textbf{Notation.} For a graph $G$, denote by $|G|$ the number of vertices in $G$ and by $\nu(G)$ the size of a largest matching in $G$. For each $A\subseteq V(G)$, let $G[A]$ and $G-A$ be the induced subgraph of $G$ on the vertex set $A$ and $V(G)\setminus A$  respectively. When $F$ is a spanning subgraph of $G$, denote by $G\setminus F$ the spanning subgraph of $G$ with edge set $E(G)\setminus E(F)$. For any set of colours $U$ from an edge-coloured graph $G$, let $G_U$ be the maximal subgraph of $G$ whose edges have colour in $U$.
We omit floor and ceiling signs when they are not essential.

\medskip

The main tool we use to find rainbow spanning trees is the following result of Schrijver~\cite{Sch} and Suzuki~\cite{Suzuki}.

\begin{theorem}\label{rain}
Let $G$ be an $n$-vertex edge-coloured graph. If, for every $1\leq s\leq n$ and for every partition $S$ of $V(G)$ into $s$ parts, there are at least $s-1$ edges of different colours between the parts of $S$, then $G$ contains a rainbow spanning tree.\hfill\qed
\end{theorem}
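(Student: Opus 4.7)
The hypothesis is a Hall-type condition on every vertex-partition, and the natural framework is matroid intersection. I would let $M_1$ be the graphic matroid of $G$, so that $S\subseteq E(G)$ is independent iff $(V(G),S)$ is a forest, with rank $r_1(S)=n-c(S)$, where $c(S)$ denotes the number of connected components of the spanning subgraph $(V(G),S)$; and let $M_2$ be the partition matroid on $E(G)$ whose independent sets are exactly those edge subsets containing at most one edge of each colour, so $r_2(T)$ equals the number of distinct colours appearing on $T$. A rainbow spanning tree of $G$ is then precisely a common independent set of $M_1$ and $M_2$ of size $n-1$.

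By Edmonds' matroid intersection theorem, the maximum size of a common independent set of $M_1$ and $M_2$ equals $\min_{S\subseteq E(G)}\bigl(r_1(S)+r_2(E(G)\setminus S)\bigr)$, so it suffices to show $r_1(S)+r_2(E(G)\setminus S)\ge n-1$ for every $S\subseteq E(G)$. The plan is to verify this inequality directly from the hypothesis. Given $S$, let $s=c(S)$ and let $\mathcal{P}$ be the partition of $V(G)$ into the $s$ connected components of $(V(G),S)$. Every edge of $G$ that crosses $\mathcal{P}$ lies in $E(G)\setminus S$, because edges of $S$ stay inside their own component. Hence the number of colours appearing on $E(G)\setminus S$ is at least the number of colours on edges crossing $\mathcal{P}$, which by hypothesis (applied to the partition $\mathcal{P}$ of size $s$) is at least $s-1$. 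Combining these bounds gives $r_1(S)+r_2(E(G)\setminus S)\ge(n-s)+(s-1)=n-1$, as required, and the resulting common independent set of size $n-1$ is a rainbow spanning tree.

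The main obstacle is bridging the two formulations of the condition: the hypothesis quantifies over partitions of $V(G)$, whereas Edmonds' condition quantifies over edge subsets $S\subseteq E(G)$. This translation is handled by passing to the partition into connected components of $(V(G),S)$, which is really the only non-mechanical step in the argument; once the matroid set-up is in place, the rank inequality is a one-line computation. An alternative plan would be a direct augmenting-path argument in the style of Nash-Williams--Tutte tree-packing with colour-swaps, but invoking Edmonds' theorem as a black box is considerably shorter and keeps the combinatorial content concentrated in the single observation that $S$-edges do not cross the component partition of $S$.
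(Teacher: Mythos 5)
Your proof is correct: the rank computation $r_1(S)+r_2(E(G)\setminus S)\ge (n-s)+(s-1)$ via the component partition of $(V(G),S)$ is exactly the right translation of the hypothesis into Edmonds' condition, and a common independent set of size $n-1$ is indeed a rainbow spanning tree. The paper does not prove this statement at all -- it cites it to Schrijver and Suzuki -- and your matroid intersection argument is essentially the standard (Schrijver-style) proof behind that citation, so there is nothing to flag beyond noting that the black box you invoke is precisely where the cited source places the content.
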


Roughly speaking, we will iteratively find and remove edge-disjoint spanning trees in an edge-coloured $K_n$. By applying Theorem~\ref{rain} to a large subgraph on vertices which have large remaining degree, and using a matching to extend the resulting tree to a spanning tree of $K_n$, we can find spanning rainbow trees with bounded maximum degree in which vertices with small degree in the remaining subgraph of $K_n$ appear as leaves in each new rainbow spanning tree. This allows the iterative removal of spanning trees without decreasing the minimum degree too drastically. This iteration is carried out for the key lemma, Lemma~\ref{wellcoll2}.

The embedding differs depending on the colouring of $K_n$. To describe our cases, we use the following definitions.

\begin{defn}
Let $\alpha>0$. We say a class of colours $U$ is $\alpha$-\emph{rich} in an edge-coloured graph $G$ if $\nu(G_U)\ge \alpha |G|$.
\end{defn}

\begin{defn}
Let $\alpha>0$. We say an edge-coloured graph $G$ is $\alpha$-\emph{well-coloured} if its colours can be partitioned into $|G|-1$ $\alpha$-rich classes.
\end{defn}
\begin{defn}
	Let $\alpha>0$ and $t\in\mathbb{N}$. We say an edge-coloured graph $G$ is $(\alpha,t)$-\emph{robustly-coloured} if, despite the removal of any $t$ rainbow forests, $G$ remains $\a$-well-coloured.
\end{defn}

If a graph does not have many rich colours, then it is robustly coloured, as follows.

\begin{lemma}\label{lem-part} 
	Let $0<\alpha\le 1/8$ and $\beta\leq \a/4$. Then, for any properly edge-coloured $K_n$, either
	%\begin{enumerate} 
	
		(i) there are at least $n-16\beta n$ colours that are $\alpha$-rich, or
		
		(ii) $K_n$ is $\left(\beta,\beta n\right)$-robustly-coloured.
%	\end{enumerate}
\end{lemma}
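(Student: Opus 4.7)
Suppose (i) fails, so fewer than $n-16\beta n$ colours are $\alpha$-rich; let $R$ denote this set. Since each colour is a matching of size at most $n/2$, the non-rich colours carry more than $\binom{n}{2}-|R|n/2>8\beta n^2-n/2$ edges of $K_n$. To prove (ii), my plan is to show that for any $\beta n$ rainbow forests $F_1,\ldots,F_{\beta n}$ the graph $G':=K_n\setminus\bigcup_iF_i$ is $\beta$-well-coloured, by exhibiting a partition of its colours into $n-1$ $\beta$-rich classes.

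Each rainbow forest contains at most one edge of any colour, so every $\alpha$-rich colour $c$ retains at least $\alpha n-\beta n\ge 3\beta n$ edges in $G'$; hence $\{c\}$ alone is a $\beta$-rich singleton class, accounting for $|R|$ of the $n-1$ classes. The remaining $j:=n-1-|R|>16\beta n-1$ classes must come from the non-rich colours, whose edges in $G'$ still total at least $7\beta n^2-O(n)$.

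For these $j$ classes, the plan is to iterate: at each step find a set $U$ of yet-unassigned non-rich colours with $\nu(G'_U)\ge\beta n$, declare $U$ a new class, and continue; any leftover non-rich colours are then appended arbitrarily to existing classes, which only enlarges their matchings. A matching of size $\beta n$ is available in the remaining non-rich subgraph whenever that subgraph has $\ge\beta n^2$ edges, via the Vizing-type bound $\nu(H)\ge e(H)/(\Delta(H)+1)$ applied to a subgraph with maximum degree at most $n$.

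The main obstacle is the edge-accounting: each iteration could naively consume up to $\alpha\beta n^2$ edges (at most $\beta n$ colours, each with up to $\alpha n$ edges), and $j\gtrsim 16\beta n$ rounds at that rate would exhaust the $\approx 7\beta n^2$ non-rich budget. The fix is an adaptive choice of $U$: while some non-rich colour $c$ with $|c|\ge 2\beta n$ remains, take $\{c\}$ as a singleton class (consuming one colour and at most $\alpha n$ edges for one whole class); only once all remaining non-rich colours are tiny ($|c|<2\beta n$) do we form larger $U$'s, where the per-iteration edge cost is correspondingly smaller because each participating colour contributes $<2\beta n$ edges. The precise constants $\alpha\le 1/8$ and $\beta\le\alpha/4$ in the hypothesis are what make this accounting close, and verifying the final tiny-only phase (in which one must simultaneously control the number of classes formed and their total edge consumption) is the technical heart of the proof.
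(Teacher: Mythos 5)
Your high-level skeleton (rich colours become singleton classes, non-rich colours are grouped greedily into $\beta$-rich classes, and an edge count shows the greedy can keep going) is the same as the paper's, but the step you yourself flag as ``the technical heart'' is exactly where the argument breaks, and your proposed two-phase fix does not repair it. The problem is that assigning a colour to a class removes \emph{all} of its edges from the pool while the class's matching may use only one edge of that colour. In your tiny-colour phase a class built from the colours of a matching of size $\beta n$ can consume up to $\beta n\cdot\beta n=\beta^2n^2$ edges, and when few colours are rich you may need close to $n-1$ such classes (if no colour is $\alpha$-rich then $j=n-1$), so the naive consumption is of order $\beta^2 n^3$, far exceeding the at most $\binom n2$ available edges; equivalently, your greedy as described can stall after only $O(1/\beta^2)$ classes. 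A concrete bad scenario is a proper colouring in which every colour has about $\sqrt n$ edges: (i) fails badly, every one of the $n-1$ classes must be a union of many small colours, and nothing in your argument guarantees that a union of small colour classes has large matching number without paying for all of their edges (the colour classes could, a priori, be concentrated on few vertices). The constants $\alpha\le 1/8$, $\beta\le\alpha/4$ do not make this accounting close; they are used in the paper for a different, linear-in-$n$ budget.

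The missing idea is the paper's Proposition~\ref{findunited}: in a properly coloured graph with no $\alpha$-rich colour and many edges one can find an $\alpha$-rich class $U$ that is \emph{edge-efficient}, namely $|E(G_U)|\le 4\alpha n$ (the proof takes $U$ maximising $\nu(G_U)$ subject to $|E(G_U)|\le 2\nu(G_U)$ and $\nu(G_U)\le 2\alpha n$, and uses an exchange argument). With classes costing only $O(n)$ edges each, extracting $r=n-1-t$ of them fits inside the roughly $rn/2$ edges carried by non-rich colours, which is how the paper's accounting closes (and the paper phrases it as a contradiction: if the extraction stops early, the leftover graph has too many edges to lack an efficient $\beta$-rich class). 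Until you supply such an efficient-class mechanism — or some other way to certify a $\beta n$ matching per class while charging only $O(n)$ edges — the proposal has a genuine gap.
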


Let $K_n$ be properly edge-coloured. Letting $\a=1/8$ and $\beta=\a/2400$, apply Lemma~\ref{lem-part}. If \emph{(ii)} in  Lemma~\ref{lem-part} holds for $K_n$, then the following lemma shows that $K_n$ contains at least $n/10^{12}$ edge-disjoint rainbow spanning trees.

\begin{lemma}\label{wellcol} 
	Let $0<\beta\le 1/2$. Then, every $\left(\beta,\beta n\right)$-robustly-coloured $K_n$ contains at least $\beta^2n/2500$ edge-disjoint rainbow spanning trees.
\end{lemma}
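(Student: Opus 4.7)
The plan is to iteratively extract $t := \beta^2 n / 2500$ rainbow spanning trees $T_1, \ldots, T_t$ from $K_n$, setting $G_0 := K_n$ and $G_i := G_{i-1} \setminus T_i$. Since $t \leq \beta n$, the $(\beta, \beta n)$-robustness hypothesis guarantees every $G_{i-1}$ is $\beta$-well-coloured: its colours partition into $n-1$ classes, each with matching number at least $\beta n$. So it suffices to find a single rainbow spanning tree in each $G_{i-1}$.

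To find $T_i$, I would apply Theorem~\ref{rain} to $G_{i-1}$ with its original colouring, verifying the cut condition for every partition of $V$ into $s$ parts via two complementary bounds. Write $a_1, \ldots, a_s$ for the part sizes, and $\Delta$ for the maximum number of edges removed at any vertex in previous iterations. Bound (a): at any vertex in a smallest part, proper colouring yields at least $n - \min_k a_k - \Delta$ distinct-coloured cut edges, which beats $s-1$ whenever $\Delta \leq (s-1)(n-s)/s$; this covers the range $s \leq n - \Delta$. Bound (b): if $b$ colour classes have no cut edge, their in-part $\beta n$-matchings are pairwise edge-disjoint by properness, so
\[
  b \cdot \beta n \;\leq\; \sum_k \binom{a_k}{2} \;\leq\; \tfrac{1}{2}(n-s+1)(n-s),
\]
forcing $b \leq n-s$ whenever $s \geq (1-2\beta)n + 1$, and hence at least $s-1$ distinct classes (and so distinct colours) in the cut. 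Provided $\Delta \leq 2\beta n$, these two regimes cover every $s \geq 2$.

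Ensuring $\Delta \leq 2\beta n$ over $t$ iterations requires each $T_i$ to have bounded maximum degree $D = O(1/\beta)$ (so that $\Delta \leq tD = \beta^2 n D/2500 \leq 2\beta n$ for $D \leq 5000/\beta$). I would construct $T_i$ by picking a small set $B$ of vertices whose remaining degree in $G_{i-1}$ is lowest, applying Theorem~\ref{rain} inside $G_{i-1}[V \setminus B]$ to obtain a rainbow spanning tree $T_A$ of $V \setminus B$, and extending $T_A$ by attaching each $v \in B$ as a leaf via a rainbow matching from $B$ into $V \setminus B$ using colours disjoint from those in $T_A$. The matching extension is tractable because $T_A$ uses only $|V \setminus B| - 1$ colours, so many rich classes remain untouched, and a Hall-type argument inside such classes selects the required cut edges.

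The main obstacle is controlling the maximum degree of $T_A$ itself, since the raw Theorem~\ref{rain} gives no degree bound. I expect this requires a careful refinement of the construction behind Theorem~\ref{rain} to yield bounded-degree rainbow spanning trees inside the well-coloured subgraph $G_{i-1}[V \setminus B]$, or an inductive scheme that absorbs the degree budget into the iteration by dynamically updating $B$ as tree edges accumulate at each vertex. This bounded-degree construction is the technically hardest step of the proof.
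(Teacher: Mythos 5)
Your cut-condition verification is essentially sound and parallels the paper's Lemma~\ref{embed1}: for small $s$ you use the proper colouring at a vertex of a smallest part, and for $s$ close to $n$ you use the $\beta$-rich classes to force many colours across the partition. The genuine gap is the degree control that your whole iteration hinges on. You require each extracted tree to have maximum degree $D=O(1/\beta)$ so that the accumulated loss $\Delta\le tD$ stays below $2\beta n$, and you concede that you do not know how to produce such bounded-degree rainbow spanning trees ("a careful refinement of the construction behind Theorem~\ref{rain}"). This is not a routine refinement: Theorem~\ref{rain} comes from a matroid-type argument and gives no degree information, and constructing rainbow spanning structures of bounded maximum degree in properly coloured complete graphs (the extreme case being a rainbow Hamilton path) is substantially harder than the lemma you are trying to prove. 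As written, the proposal therefore reduces the lemma to an unproved statement that is at least as difficult.

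The paper sidesteps this entirely, and this is the idea your proposal is missing. It never bounds the maximum degree of an individual tree by a constant; the trees produced by Lemma~\ref{treecor} may have degree as large as $(1-\alpha)n$. Instead, each new tree is required to have every vertex of large accumulated degree (at least $\alpha n/12$ in the union $F_i$ of previous trees) appear as a \emph{leaf}. Since the trees use at most $\ell n$ edges in total, at any stage only at most $2\ell n/(\alpha n/12)\le \beta n$ vertices have large accumulated degree, so this leaf condition is cheap to impose: one first reserves a pendant edge (with a fresh colour) for each such vertex, deletes those vertices and colours, and applies the Schrijver--Suzuki criterion to the remaining well-coloured graph (Lemmas~\ref{treecor} and~\ref{embed1}). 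Consequently each vertex can have large degree in at most one tree, and its total degree in $F_i$ is at most $\alpha n/12+(1-\alpha/6)n+\ell$, which keeps the minimum remaining degree at least $\alpha n/30$ without ever needing bounded-degree trees. Your secondary plan of attaching the low-degree set $B$ after building $T_A$ (via a Hall-type argument on unused colours) is also more delicate than needed; the paper reserves the pendant edges and their colours \emph{before} building the tree, which avoids any interaction between the tree's colours and the attachment step.
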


If, however, \emph{(i)} in Lemma~\ref{lem-part} holds for $K_n$, then the following lemma shows that $K_n$ contains at least $n/10^{12}$ edge-disjoint rainbow spanning trees.

\begin{lemma}\label{embed-many-orig-rich}
	Let $0<\alpha\le 1/2$. Then, every properly edge-coloured $K_n$ with at least $n-\alpha n/150$ $\alpha$-rich colours contains at least $\alpha^2n/10^6$ edge-disjoint rainbow spanning trees.
\end{lemma}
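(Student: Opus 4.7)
The plan is to iteratively extract $k = \alpha^2 n/10^6$ edge-disjoint rainbow spanning trees from $K_n$, maintaining throughout the invariants that (a) each initially $\alpha$-rich colour is still $(\alpha/2)$-rich in the remaining graph $G_i := K_n \setminus \bigcup_{j \leq i} T_j$, and (b) every constructed tree $T_j$ has maximum degree at most some constant $\Delta$. Invariant (a) is automatic because each rainbow tree removes at most one edge of any given colour, while $k < \alpha n/2$. Invariant (b) ensures that vertex degrees in $G_i$ drop by at most $k \Delta$, which is negligible compared to $n$.

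To construct $T_{i+1}$ from $G_i$, I would first fix a set $L \subseteq V(K_n)$ of size $|L| = \alpha n/100$ (for instance, the current minimum-degree vertices), and then proceed in two steps. First, greedily build a rainbow matching $M$ saturating $L$ with all other endpoints in $U := V \setminus L$, using $|L|$ distinct colours $C_M$; this greedy construction succeeds because at each stage the next $\ell \in L$ has at least $|U| - k\Delta - 2|L| \geq n/2$ eligible edges. Second, apply Theorem~\ref{rain} to the subgraph of $G_i[U]$ consisting of edges with colour in $C \setminus C_M$, to obtain a rainbow spanning tree $T'$ of $U$. Then $T_{i+1} := T' \cup M$ is a rainbow spanning tree of $K_n$ in which every $\ell \in L$ is a leaf.

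The main obstacle is verifying the Schrijver--Suzuki hypothesis for the second step: for every partition of $U$ into $s$ parts, there must be at least $s - 1$ edges of distinct non-$C_M$ colours crossing between parts. I would split into two regimes. For $s \leq |U| - \alpha n$, pick any vertex $v$ in a smallest part; by the properness of the colouring and the high remaining degree of $v$ in $G_i$, the edges from $v$ to other parts of $U$ carry at least $|U|(s-1)/s - k\Delta - |L| \geq s - 1$ distinct non-$C_M$ colours. For $|U| - \alpha n < s \leq |U|$, writing $t := |U| - s$, the number of within-part edges in any such partition is at most $t^2/2$, so the number of \emph{bad} rich colours (those whose matching lies entirely within parts) is at most $t^2/(\alpha n - 2|L|)$; combined with the fact that at least $n - \alpha n/100$ rich colours lie in $C \setminus C_M$, this leaves at least $s - 1$ good rich colours.

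A secondary issue is bounding the maximum degree of $T'$ (and hence of $T_{i+1}$, which exceeds $T'$'s by at most one from $M$), since Theorem~\ref{rain} does not provide this directly. This can be handled by a local swap argument: if some $u \in U$ has degree greater than $\Delta - 1$ in $T'$, exchange one of $u$'s tree edges for a non-tree edge of the same colour whose matching-mate elsewhere in $G_i[U]$ is available (which exists since each rich colour still has a large matching in $G_i$), repeating until the degree bound holds. Alternatively, adaptively taking $L$ to be the current lowest-degree vertices ensures that a vertex becoming a hub in one iteration is forced to become a leaf in the next, bounding the cumulative degree drop at each vertex by $O(k)$ as desired.
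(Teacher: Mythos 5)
There is a genuine gap, and it sits exactly where your colour accounting is least careful: the near-singleton partitions in the Schrijver--Suzuki check. You count only rich colours there, but the arithmetic does not close. The hypothesis gives at least $n-\alpha n/150$ rich colours, and your matching $M$ may consume up to $|C_M|=|L|=\alpha n/100$ of them (your greedy construction gives no control over whether $C_M$ is rich or not), so the rich colours available to $T'$ may number only $n-\alpha n/150-\alpha n/100=n-\alpha n/60$, not the ``$n-\alpha n/100$'' you assert. For the all-singleton partition of $U$ you need $|U|-1=n-\alpha n/100-1$ distinctly coloured crossing edges, so rich colours alone fall short by roughly $\alpha n/150$ --- which is precisely $r$, the possible deficit of rich colours below $n-1$. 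That deficit must be covered by non-rich colours having unused edges inside $U$ avoiding $C_M$, and nothing in your invariants guarantees this: invariant (a) tracks only the rich colours, while a non-rich colour may have very few edges, and earlier iterations (your matchings and trees use arbitrary colours) can exhaust or strand them. Shrinking $|L|$ does not help, since once $|L|<\alpha n/150$ the tree $T'$ is \emph{forced} to use about $\alpha n/150-|L|$ non-rich colours. This allocation problem is the whole point of the paper's argument for this lemma: it first reserves, up front, $\ell$ pairwise edge-disjoint rainbow matchings $M_1,\dots,M_\ell$ of $r$ edges inside the graph $R$ of non-rich colours (Lemma~\ref{findmatch2}), dedicates one to each tree, deletes one endpoint of each matching edge, and then the remaining spanning-tree problem on $X_i$ needs only $|X_i|-1\le s$ colours and can be run entirely on rich colours via the robust-colouring machinery (Lemma~\ref{wellcoll2}). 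Your matching $M$ plays a different role (turning low-degree vertices into leaves) and cannot do double duty without the reservation step.

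A secondary but also real problem is your degree control. Theorem~\ref{rain} gives no bound on $\Delta(T')$, and the ``local swap'' fix is not an argument: replacing a tree edge at a high-degree vertex by another edge of the same colour must also reconnect the two components of the broken tree, which you have not ensured. The alternative fix is closer to the paper's mechanism, but your conclusion that the cumulative degree loss at each vertex is $O(k)$ is false under your own scheme: a vertex that serves as a hub in a single iteration can lose degree $\Theta(n)$ in that one tree. The paper does not attempt a constant degree bound; Lemma~\ref{treecor} only guarantees $\Delta(T_i)\le(1-\alpha)n$, and Lemma~\ref{wellcoll2} compensates by forcing every vertex whose accumulated degree in previous trees exceeds $\alpha n/12$ to be a leaf in \emph{all} subsequent trees, budgeting a one-off loss of up to $(1-\alpha/6)n$ per vertex against the minimum degree $(1-\alpha/30)n$. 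Your write-up needs an analogous ledger (hubs stay leaves forever, with a bound on how many hubs arise) rather than the $O(k)$ claim.
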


Thus, in either case, $K_n$ contains at least $n/10^{12}$ edge-disjoint rainbow spanning trees. Therefore, to prove Theorem~\ref{rainbowmanytrees} it is sufficient to prove Lemmas~\ref{lem-part},~\ref{wellcol} and~\ref{embed-many-orig-rich}, which we do in the remaining three sections.

%%%%%%%%%%%%%%%%%%%%%%%%%%%%%%%%%%%%%%%%%%%%%%%%%%%%%%%%%%%%%%%%%%%%%%%%%%%%%%%%%%%%%%%%%%%%%%%%%%%%%%%
%%%%%%%%%%%%%%%%%%%%%%%%%%%%%%%%%%%%%%%%%%%%%%%%%%%%%%%%%%%%%%%%%%%%%%%%%%%%%%%%%%%%%%%%%%%%%%%%%%%%%%%

\section{Partitioning the colours}\label{sec-part}
Before proving Lemma~\ref{lem-part}, we show that any graph without a rich colour, but with many edges, has a rich class of colours, as follows.

\begin{prop}\label{findunited}
Let $\a>0$ and $n\in \N$. Any properly edge-coloured $n$-vertex graph $G$ with at least $4\a n^2$ edges and no $\a$-rich colour has an $\a$-rich class $U$ of colours with $|E(G_U)|\leq 4\a n$.
\end{prop}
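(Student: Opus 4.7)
The plan is to construct $U$ greedily, adding one colour at a time while controlling the ratio between new edges introduced into $|E(G_U)|$ and the corresponding growth of $\nu(G_U)$. Writing $\delta_c := \nu(G_{U \cup \{c\}}) - \nu(G_U)$, I will ensure every colour $c$ added satisfies $|E(G_c)| \le 2\delta_c$. Summing over all added colours gives $|E(G_U)| \le 2\nu(G_U)$, and I stop as soon as $\nu(G_U) \ge \alpha n$. Since no colour is $\alpha$-rich, the last colour added contributes $\delta_c \le |E(G_c)| < \alpha n$, so $\nu(G_U) < 2\alpha n$ at termination and hence $|E(G_U)| < 4\alpha n$, as required.

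The crux is showing that such a colour $c$ always exists while $\nu(G_U) < \alpha n$. Fix a maximum matching $M$ of $G_U$ with $m := |M| < \alpha n$. Since $M$ is maximum in $G_U$, no edge of colour in $U$ lies in $V(G)\setminus V(M)$ (otherwise $M$ could be extended), so every edge in $G[V\setminus V(M)]$ has colour outside $U$. Moreover, for each $c \notin U$ the edges of $c$ lying inside $V\setminus V(M)$ form a matching $M_c$ (by properness), and $M \cup M_c$ is a matching in $G_{U \cup \{c\}}$, so $\delta_c \ge |M_c|$. Summing,
\[
\sum_{c\notin U} \delta_c \;\ge\; |E(G[V\setminus V(M)])| \;\ge\; |E(G)| - 2m(n-1).
\]
If every $c \notin U$ had $|E(G_c)| > 2\delta_c$, then $|E(G)| \ge \sum_{c\notin U} |E(G_c)| > 2\sum_{c\notin U} \delta_c \ge 2|E(G)| - 4m(n-1)$, giving $|E(G)| < 4m(n-1) < 4\alpha n^2$, contradicting the hypothesis. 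Hence a suitable $c$ always exists while $\nu(G_U) < \alpha n$, and the iteration proceeds.

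The main subtlety is the choice of the constant $2$ in the ratio condition, which is tight: a looser ratio (say $4$) would only yield $|E(G_U)| < 8\alpha n$, losing the factor we need, while a tighter one fails the averaging, since the hypothesis $|E(G)| \ge 4\alpha n^2$ is matched exactly by the bound $4mn$ derived from $m < \alpha n$ in the contradiction above. The only other small thing to verify is that the iteration cannot exhaust all colours while $\nu(G_U) < \alpha n$, but this is immediate from the displayed inequality, since $|E(G[V\setminus V(M)])| > 0$ forces at least one colour outside $U$.
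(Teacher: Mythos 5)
Your proof is correct, and it is essentially the paper's argument in iterative form: the paper takes $U$ extremal (maximising $\nu(G_U)$ subject to $|E(G_U)|\le 2\nu(G_U)$ and $\nu(G_U)\le 2\a n$) and reaches the same contradiction by counting edges meeting a maximum matching of $G_U$ against the hypothesis $|E(G)|\ge 4\a n^2$. Your greedy construction with the invariant $|E(G_c)|\le 2\delta_c$, verified by averaging over colours outside $U$, is the same counting with the same constants, just packaged as a process rather than an extremal choice.
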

\begin{proof}  Let $U$ be a (possibly empty) class of colours which maximises $\nu(G_U)$ subject to $|E(G_U)|\leq 2\nu (G_U)$ and $\nu(G_U)\leq 2\a n$. Assume that $\nu(G_U)<\a n$, for otherwise $U$ satisfies the lemma. Let $M$ be a maximal matching in $G_U$, so that $|M|< \a n$. If, for some $k\geq 1$, a colour $c$ has $k$ edges coloured $c$ in $G-V(M)$ and at most $k$ edges with a vertex in $V(M)$, we could add $c$ to $U$ to contradict the maximality of $U$ (noting that $k\leq \a n$). As there are at most $|V(M)|n$ edges with a vertex in $M$, there are thus at most $2|V(M)|n<4\a n^2$ edges in total, a contradiction.
\end{proof}

\begin{proof}[Proof of Lemma~\ref{lem-part}] Let $C_1,\ldots, C_t$ be the $\a$-rich colours in $K_n$. Let $r=n-1-t$. Suppose $r\geq 16\beta n$, for otherwise we are done. Let $\ell=\beta n$ and let $F\subset K_n$ be the union of any $\ell$ rainbow forests. Let $G$ be the subgraph of $K_n$ formed by removal of the edges of $F$ and of any edges with colour $C_i$ for some $i$. Note that $|E(G)|\geq n(n-1)/2-|E(F)|-t\cdot n/2\geq rn/2-\beta n^2$.

Iteratively, remove as many disjoint $\beta$-rich classes $U_1,\ldots,U_s$ of colours from $G$ as possible, subject to $|E(G_{U_i})|\leq \a n$, and call the resulting subgraph $G'$. Note that, as $G$ has no $\a$-rich colour, $G'$ has no $\beta$-rich colour, $C$ say, for otherwise $\{C\}$ would be $\beta$-rich with $|E(G'_{\{C\}})|\leq \a n$, a contradiction. If $s\geq r$, then, noting that each colour $C_i$ has at least $\a n-\beta n\geq \beta n$ edges outside $F$, these classes along with $C_i$, $1\le i\le t$, demonstrate that $K_n-F$ is $\beta$-well-coloured, and thus, as required, $K_n$ is $(\beta,\ell)$-robustly-coloured. Suppose then, that $s<r$. 

As $G'$ has no $\beta$-rich class of colours with at most $4\beta n\leq \a n$ edges, and no $\beta$-rich colour, by Proposition~\ref{findunited}, $|E(G')|< 4\beta n^2$. On the other hand, as fewer than $r\cdot \a n$ edges were removed by the iterative process, $|E(G')|> r n/2-\beta n^2-\a r n\geq rn/4\geq 4\beta n^2$, a contradiction.
\end{proof}

%%%%%%%%%%%%%%%%%%%%%%%%%%%%%%%%%%%%%%%%%%%%%%%%%%%%%%%%%%%%%%%%%%%%%%%%%%%%%%%%%%%%%%%%%%%%%%%%%%%%%%%
%%%%%%%%%%%%%%%%%%%%%%%%%%%%%%%%%%%%%%%%%%%%%%%%%%%%%%%%%%%%%%%%%%%%%%%%%%%%%%%%%%%%%%%%%%%%%%%%%%%%%%%

\section{Rainbow trees in robustly-coloured graphs}\label{sec-welcor}
We will prove the following more general version of Lemma~\ref{wellcol}, as it will be useful later when proving Lemma~\ref{embed-many-orig-rich}.

\begin{lemma}\label{wellcoll2}
Let $0<\a\le 1/2$, $n\in \N$ and $\ell\leq \alpha^2n/2500$. Let $G$ be an $n$-vertex properly edge-coloured graph. Let  $G_i\subset G$, $1\le i\le \ell$, be $(\alpha,\ell)$-robustly-coloured subgraphs of $G$ satisfying $\delta(G_i)\geq (1-\a/30)n$. Then, $G$ contains edge-disjoint rainbow trees $T_1,\ldots,T_\ell$ so that, for each $1\leq i\leq \ell$, $T_i$ is a spanning tree of $G_i$.
\end{lemma}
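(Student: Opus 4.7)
The plan is to construct the rainbow spanning trees $T_1,\ldots,T_\ell$ one at a time, by induction on $i$. Suppose edge-disjoint rainbow spanning trees $T_1,\ldots,T_{i-1}$ of $G_1,\ldots,G_{i-1}$ have already been built, and set $G_i^{\ast}:=G_i\setminus\bigcup_{j<i}E(T_j)$. Since each $T_j\cap G_i$ is a rainbow forest in $G_i$ and at most $\ell$ of them are removed, the $(\alpha,\ell)$-robust-colouredness of $G_i$ gives that $G_i^{\ast}$ is $\alpha$-well-coloured; fix $\alpha$-rich colour classes $U_1,\ldots,U_{|G_i|-1}$.

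I would then separate the low-degree vertices $B_i:=\{v\in V(G_i):\deg_{G_i^{\ast}}(v)<(1-\alpha/10)n\}$ from $A_i:=V(G_i)\setminus B_i$. Each $v\in B_i$ has lost more than $\delta(G_i)-(1-\alpha/10)n\ge \alpha n/15$ of its original $G_i$-edges, and $\sum_{j<i}\sum_v\deg_{T_j}(v)\le 2\ell n$, so $|B_i|\le 30\ell/\alpha\le 3\alpha n/250$. I would build $T_i$ with $B_i$ as a set of leaves, so that each $v\in B_i$ loses just one further edge at this stage.

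To construct $T_i$, I would first apply Theorem~\ref{rain} to $G_i^{\ast}[A_i]$, with the colour classes restricted to $A_i$ (still essentially $\alpha$-rich, since $|B_i|\ll|A_i|$), to obtain a rainbow spanning tree $T_i'$ of $A_i$. The hypothesis is verified by a two-range case analysis on the number $s$ of parts of a vertex-partition of $A_i$. For small $s$, a vertex $v$ in a smallest part has at least $\deg_{G_i^{\ast}[A_i]}(v)-(|V_k|-1)\ge s-1$ edges to other parts, all of distinct colours by the proper colouring. For large $s$, if fewer than $s-1$ classes supplied a crossing edge, the $\ge|A_i|-s+1$ remaining classes would force more non-crossing matching edges than the $\binom{|A_i|-s+1}{2}$ edges that fit inside parts, a contradiction. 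The high minimum degree on $A_i$ makes the two ranges overlap for every $s\le|A_i|$. I would then extend $T_i'$ by a rainbow matching $M_i$ from $B_i$ into $A_i$ in $G_i^{\ast}$, drawing colours from the $\ge|B_i|$ classes not represented in $T_i'$; existence is a Hall-type greedy argument using the $\alpha$-richness of each unused class together with the smallness of $B_i$. Setting $T_i:=T_i'\cup M_i$ gives the rainbow spanning tree of $G_i$ for this stage.

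The main obstacle is controlling the maximum degree of each $T_i$ so that degrees in $G_i^{\ast}$ do not collapse across the $\ell$ iterations. Theorem~\ref{rain} alone provides no degree bound, so I would enforce a bound $D=D(\alpha)$ on the maximum degree of $T_i'$ by a rainbow edge-swap argument: if a vertex $v$ has degree exceeding $D$ in the current tree, delete one of its tree-edges to split the tree into two components, then reconnect the pieces via an edge not incident to $v$ whose colour is unused in the tree. The extreme density of $G_i^{\ast}[A_i]$ ensures such replacement edges exist, and iterated swaps drive the maximum degree down to $D$. With $D\ell\le\alpha n/30$, one has $\deg_{G_i^{\ast}}(v)\ge\delta(G_i)-D\ell > n/2$ at every stage, which is exactly what supports the $|B_i|$ bound, the partition-condition of Theorem~\ref{rain} on $A_i$, and the Hall-type existence of $M_i$ described above. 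Formalizing this swap argument rigorously and choosing all parameters compatibly is the most delicate piece.
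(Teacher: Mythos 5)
Your proposal has a genuine gap at its load-bearing step: the claim that each $T_i'$ can be made to have maximum degree $D=D(\alpha)=O(1/\alpha)$ by a ``rainbow edge-swap'' argument. A rainbow spanning tree of constant maximum degree in a dense properly coloured graph is a much stronger object than anything the lemma needs, and the swap you sketch does not survive even the basic counting check. After deleting a tree edge at a high-degree vertex $v$, one component of the tree may be a single vertex $u$; the crossing edges avoiding $v$ are then just the edges from $u$ into the other component, which number at most $|A_i|-2$ and (by properness) have distinct colours, while the tree already uses $|A_i|-2$ colours -- so every candidate edge may carry a used colour, and more generally the count (number of crossing edges) $\le$ (number of used colours)$\times$(maximum multiplicity of a colour across the cut) gives nothing. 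Rescuing this would require invoking the $\alpha$-rich classes in a substantive way, which is exactly the content you have not supplied. Note also that if the bounded-degree claim did hold with $(D+1)\ell\le \alpha n/30$, then no vertex would ever lose more than $\alpha n/30$ edges, your sets $B_i$ would be empty, and the rest of your bookkeeping would be vacuous: the entire proof is the unproven claim. A secondary issue is the order of operations in attaching $B_i$: you build $T_i'$ first and then hope each $v\in B_i$ still has an edge whose colour lies in a class unused by $T_i'$; a vertex in $B_i$ may have very few surviving edges, all of whose colours already appear in $T_i'$, so the Hall-type step is not justified either.

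The paper's proof avoids needing any strong degree bound. It only caps each tree's maximum degree at $(1-\alpha/6)n$, and the mechanism that prevents degree collapse is cumulative: any vertex whose total degree in the union $F_i$ of previously built trees exceeds $\alpha n/12$ is forced to be a leaf in every subsequent tree, so its lifetime loss is at most $\alpha n/12+(1-\alpha/6)n+\ell$, leaving minimum degree $\ge \alpha n/30$ in $H_i=G_i-F_i$. The leaf-forcing is done in Lemma~\ref{treecor} by first choosing, for each prescribed leaf (plus a buffer set $B$ of size $\alpha n$), a pendant edge with a fresh colour, then deleting those colours and the set $B$, and only afterwards finding a rainbow spanning tree of the remainder via the well-colouredness and Theorem~\ref{rain} (Lemma~\ref{embed1}) -- the reverse of your order, which is what makes the attachment step sound. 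You would need either to prove your bounded-degree rainbow spanning tree claim (a substantial result in its own right) or to switch to a cumulative-degree/leaf-forcing scheme of this kind.
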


Note that Lemma~\ref{wellcol} follows immediately from Lemma~\ref{wellcoll2} by taking $\alpha=\beta$, $\ell=\beta^2 n/2500$ and $G=G_1=\ldots=G_\ell=K_n$. Before proving Lemma~\ref{wellcoll2}, we first show that any well-coloured graph with large minimum degree contains a rainbow spanning tree.

\begin{lemma}\label{embed1} 
	Let $0<\alpha \leq 1/6$, $n\geq 1/2\alpha^2$ and let $G$ be a $2\alpha$-well-coloured $n$-vertex graph, with $\delta(G)\geq (1-\alpha)n$. Then, $G$ contains a rainbow spanning tree.
\end{lemma}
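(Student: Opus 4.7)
The plan is to verify the hypothesis of Theorem~\ref{rain}. Given any partition $\mathcal{P}=\{V_1,\ldots,V_s\}$ of $V(G)$ into $s$ parts, I will show that at least $s-1$ distinct colours appear on edges crossing $\mathcal{P}$. Let $U_1,\ldots,U_{n-1}$ be the $2\alpha$-rich classes guaranteed by $2\alpha$-well-colouredness, each supporting a matching of size at least $2\alpha n$. Call $U_i$ \emph{bad} for $\mathcal{P}$ if no edge with colour in $U_i$ crosses $\mathcal{P}$. Since the classes partition the colours, distinct non-bad classes contribute distinct colours to crossing edges, so it suffices to show that the number $B$ of bad classes satisfies $B\le n-s$. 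The argument splits on the size of $s$ relative to $(1-\alpha)n$; the case $s=1$ is trivial.

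For the first range, $2\le s\le (1-\alpha)n+1$, I use a vertex argument. The smallest part $V_j$ satisfies $|V_j|\le \lfloor n/s\rfloor$, and by the minimum-degree hypothesis any $v\in V_j$ has at least $(1-\alpha)n-(|V_j|-1)$ neighbours outside $V_j$; properness makes these edges carry pairwise distinct colours, yielding at least $(1-\alpha)n-|V_j|+1$ distinct crossing colours. When $s\le n/2$, I use $|V_j|\le n/s$ and check that $s+n/s\le (1-\alpha)n+2$; on $[2,n/2]$ the convex function $s+n/s$ is maximised at the endpoints with value $n/2+2$, which is at most $(1-\alpha)n+2$ because $\alpha\le 1/6\le 1/2$. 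When $n/2<s\le(1-\alpha)n+1$, pigeonhole forces some part to be a singleton, so $|V_j|=1$ and the required bound becomes $(1-\alpha)n\ge s-1$, which is exactly the case hypothesis.

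For the second range, $s\ge (1-\alpha)n+2$, I count within-part edges. Each bad $U_i$ contains a matching of size at least $2\alpha n$, and every edge of this matching lies inside some part; as distinct classes are edge-disjoint, summing over the $B$ bad classes gives
\[
2\alpha n\cdot B \;\le\; \sum_{j=1}^{s} e(G[V_j]) \;\le\; \sum_{j=1}^{s}\binom{|V_j|}{2} \;\le\; \binom{n-s+1}{2},
\]
the last step because $\sum\binom{p_j}{2}$ subject to $\sum p_j=n$ and $s$ parts is maximised at $(n-s+1,1,\ldots,1)$. From $s\ge (1-\alpha)n+2$ I get $n-s+1\le \alpha n-1\le 4\alpha n$, and dividing gives $B\le (n-s+1)(n-s)/(4\alpha n)\le n-s$, as required.

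The main obstacle is the tension between the two ranges: the vertex bound deteriorates as $s$ grows while the counting bound only becomes favourable once $s$ is close to $n$. The real work is checking that the ranges $[2,(1-\alpha)n+1]$ and $[(1-\alpha)n+2,n]$ together exhaust $\{2,\ldots,n\}$, and that the boundary $s=(1-\alpha)n+1$ is captured by the singleton sub-case; the hypotheses $\alpha\le 1/6$ and $n\ge 1/(2\alpha^2)$ enter exactly to make the two inequalities slack enough and to rule out small-$n$ degeneracies where $\alpha n$ or $\lfloor n/s\rfloor$ misbehave.
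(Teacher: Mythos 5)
Your proof is correct, and while it rests on the same two ingredients as the paper's (the Schrijver--Suzuki criterion of Theorem~\ref{rain}, the minimum degree for coarse partitions, and the rich matchings for fine ones), the execution is genuinely different in the fine-partition regime. The paper argues by contradiction: a violating partition with at most $k-2$ crossing colours is bootstrapped, via the degree of a vertex in the smallest part and the hypothesis $n\geq 1/2\alpha^2$, to have $k>(1-2\alpha)n$ parts and hence more than $(1-4\alpha)n$ singleton parts, whereupon \emph{every} one of the $n-1$ rich classes must cross, since its matching of $2\alpha n$ edges covers $4\alpha n$ vertices and so hits a singleton -- giving $n-1>k-2$ crossing colours. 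You instead verify the criterion directly and tolerate ``bad'' classes: for $s\leq(1-\alpha)n+1$ you use the same degree-plus-properness observation at a vertex of the smallest part (with a clean convexity check of $s+n/s$ and a pigeonhole singleton for $s>n/2$), and for larger $s$ you bound the number of bad classes by $n-s$ using that distinct classes are edge-disjoint, each bad class buries a matching of $2\alpha n$ edges inside parts, and $\sum_j\binom{|V_j|}{2}\leq\binom{n-s+1}{2}$. Your counting argument is more quantitative (it does not need almost all parts to be singletons) and, as you note, does not really use $n\geq 1/2\alpha^2$, which the paper needs to force $|S_1|\leq\alpha n$; the paper's route is slightly shorter since it only has to exhibit one crossing edge per class. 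One small point: as literally stated your two ranges can miss an integer $s$ with $(1-\alpha)n+1<s<(1-\alpha)n+2$ when $(1-\alpha)n\notin\mathbb{Z}$, but this is harmless -- your second computation only needs $s\geq(1-\alpha)n+1$, since then $n-s+1\leq\alpha n\leq 4\alpha n$ and the division still yields $B\leq n-s$.
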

\begin{proof}
	Assume, for contradiction, that $G$ has no rainbow spanning tree. By Theorem~\ref{rain}, for some $k\geq 2$, there is a partition $S=\{S_1,\ldots,S_k\}$ of $V(G)$ with $0<|S_1|\le |S_2|\le\ldots\le |S_k|$ which has at most $k-2$ colours between the sets in $S$. 
	
	Consider a vertex $v\in S_1$. As $|S_1|\le n/k\le n/2$ and $\delta(G)\ge (1-\alpha)n$, there are at least $n/2-\alpha n\geq 2\alpha n$ edges from $v$ to $\cup_{i=2}^k S_i$, all of which have different colours. Thus, $2\alpha n\leq k-2$, so that $k\geq 2\alpha n$. 
In fact, then, $|S_1|\leq n/k\leq 1/2\alpha\leq \alpha n$. Therefore, $v$ has at least $n-2\alpha n$ neighbours in $\cup_{i=2}^k S_i$, and thus $k> (1-2\alpha)n$.
	
	There must then be a set $W$ of more than $(1-4\alpha)n$ singletons in $S$. Any $2\alpha$-rich colour class $U$ has a matching with at least $2\alpha n$ edges, and so at least $4\alpha n$ vertices, one of which must be in $W$. Thus, there is some edge with colour in $U$ across the partition $S$. As $G$ has at least $n-1$ disjoint such $2\alpha$-rich colour classes, there are at least $n-1>k-2$ colours between the sets in $S$, a contradiction.
\end{proof}

We now show that a careful application of Lemma~\ref{embed1} in a well-coloured graph can find a rainbow spanning tree with a maximum degree bound in which a previously chosen small set of vertices has particularly low degree.

\begin{lemma}~\label{treecor} 
	Let $0<\beta\leq \alpha\leq 1/15$ and $n\geq 1/\a^2$.
	Let $G$ be a $6\alpha$-well-coloured $n$-vertex graph with $\delta(G)\geq 3\beta n$. Let $A\subseteq V(G)$ satisfy $|A|\leq \beta n$ and $\delta(G\setminus A)\geq (1-\a)n$. Then, $G$ contains a rainbow spanning tree with maximum degree at most $(1-\a)n$ in which each vertex in $A$ is a leaf.
\end{lemma}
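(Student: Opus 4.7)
The plan is to assemble $T$ as $T = M \cup T'$, where $M$ is a rainbow matching saturating $A$ with each edge going from $A$ to a distinct vertex of $V(G)\setminus A$, and $T'$ is a rainbow spanning tree of $G-A$ using colours disjoint from those of $M$ and with $\Delta(T') \le (1-\alpha)n - 1$. Such a $T$ is rainbow, each $v \in A$ is a leaf, and every $v \in V(G)\setminus A$ satisfies $\deg_T(v) \le \deg_{T'}(v) + 1 \le (1-\alpha)n$.

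I would build $M$ greedily, one edge at a time. Each $v \in A$ has at least $3\beta n - (|A|-1) \ge 2\beta n$ neighbours in $V(G)\setminus A$. When choosing the $i$-th edge of $M$, at most $2(i-1) < 2\beta n$ endpoints-or-colours are forbidden (the proper colouring ensures each forbidden colour rules out at most one edge at $v$), so an admissible edge always remains.

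For the tree stage, let $C_M$ denote the set of colours used by $M$ and set $G^\star := (G-A) \setminus \{e : \operatorname{col}(e) \in C_M\}$. Elementary checks give $\delta(G^\star) \ge (1-\alpha)n - |C_M| \ge (1-2\alpha)(n-|A|)$, and show $G^\star$ is $4\alpha$-well-coloured on $n-|A|$ vertices: each of the $n-1$ $6\alpha$-rich classes of $G$ retains a matching of size at least $6\alpha n - 2|A| \ge 4\alpha n \ge 4\alpha(n-|A|)$ after deleting $A$ and the $C_M$-coloured edges (each removed vertex or colour kills at most one matching edge), and merging $|A|+1$ of the surviving $n-1$ classes leaves exactly $n-|A|-1$ of them.

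Applying Lemma~\ref{embed1} (with parameter $2\alpha$) to $G^\star$ yields some rainbow spanning tree $T_0$; the final step, and the main obstacle, is modifying $T_0$ so that $\Delta(T_0) \le (1-\alpha)n - 1$. A sum-of-degrees calculation shows at most $O(1/\alpha)$ vertices of $T_0$ can violate this bound. At each overloaded $v^\star$ I would perform a rainbow swap: delete some tree edge $(v^\star, u)$, splitting $T_0$ into components $C_1 \ni v^\star$ and $C_2 \ni u$, and reconnect via a rainbow bridge $(x, y)$ with $x \in C_2$ and $y \in C_1 \setminus \{v^\star\}$ whose colour is unused in $T_0 \setminus \{(v^\star, u)\}$. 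Guaranteeing such a bridge is the delicate point, and it is here that the $6\alpha$-well-colouring hypothesis (rather than only $2\alpha$) is essential: since $T_0$ uses at most $n - |A| - 1$ colours, many of the $4\alpha$-rich classes of $G^\star$ still have an untouched matching, and the large minimum degree of $G^\star$ together with these matchings furnishes enough candidate bridges across the cut, even in the worst case where all subtrees of $v^\star$ are tiny. Iterating the swap at each overloaded vertex produces the desired $T'$.
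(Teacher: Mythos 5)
There is a genuine gap, and it sits exactly where you flag "the delicate point": the degree-correction of $T_0$. Lemma~\ref{embed1} gives no degree control, and since $|A|\le\beta n$ may be far smaller than $\alpha n$, the tree $T_0$ lives on $n-|A|>(1-\alpha)n$ vertices and may be an (almost-)star, so the correction step is genuinely needed and carries the whole difficulty of the lemma. Your proposed engine for it does not work as stated. First, the counting claim "many of the $4\alpha$-rich classes of $G^\star$ still have an untouched matching" is unjustified: $T_0$ is rainbow and spanning, so it uses exactly $n-|A|-1$ colours, while $G^\star$ has exactly $n-|A|-1$ rich classes; it is entirely possible that every class contains a tree colour, and a touched class need not retain a large matching on unused colours (a class could consist of a single colour whose edges form its whole matching). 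Second, even granting some untouched rich classes, their matchings are just $\Theta(\alpha n)$ disjoint edges somewhere in $G^\star$; in the worst case you describe (all subtrees at $v^\star$ are singletons), each swap needs a freshly-coloured edge incident to one \emph{specific} vertex $u$ in the small side of the cut and avoiding $v^\star$, and nothing in the well-colouring guarantees the class matchings meet $u$. Moreover you may need up to $\alpha n$ successive swaps, each constrained by the colours of the evolving tree, so this is not a finite local repair but a substantial rainbow-rebuilding argument that is simply not supplied. (A smaller slip: in your check that $G^\star$ is $4\alpha$-well-coloured, "each removed colour kills at most one matching edge" is false, since in a proper colouring a class's witnessing matching may contain many edges of one colour; this part is repairable by discarding the at most $|A|$ classes meeting $C_M$ and keeping the remaining $n-1-|A|$ untouched classes.)

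The paper sidesteps the degree problem entirely, and you could too: instead of pruning only $A$, pad $A$ to a set $B\supseteq A$ of size exactly $\alpha n$, greedily attach to every $v\in B$ a pendant edge $vu_v$ with the $u_v$ and the colours $c_v$ all distinct ($\delta(G)\ge 3\beta n$ handles $A$, $\delta(G\setminus A)\ge(1-\alpha)n$ handles $B\setminus A$), and then apply Lemma~\ref{embed1} to $H=G-B$ with all colours $c_v$ deleted, which is still $5\alpha$-well-coloured with $\delta(H)\ge(1-5\alpha/2)|H|$. Since the inner tree has only $|H|=(1-\alpha)n$ vertices, attaching the $\alpha n$ pendant edges (to distinct vertices $u_v$) gives maximum degree at most $(|H|-1)+1=(1-\alpha)n$ automatically, with no swapping argument at all. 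This is the key idea your write-up is missing.
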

\begin{proof} Choose $B\subset V(G)$ so that $A\subseteq B$ and $|B|= \alpha n$. Greedily, for each $v\in A$, pick a vertex $u_v\in V(G)\setminus A$ and a colour $c_v$ so that $vu_v\in E(G)$ has colour $c_v$ and the vertices and colours chosen are all distinct. This is possible as $G$ is properly coloured with $\delta(G)\geq 3\beta n\ge 3|A|$. Then, for each $v\in B\setminus A$, greedily pick a vertex $u_v\in V(G)\setminus B$ and a colour $c_v$ so that $vu_v\in E(G)$ has colour $c_v$ and the vertices and colours $u_v$, $c_v$, $v\in B$, are all distinct. This is possible as $G$ is properly coloured with $\delta(G\setminus A)\geq (1-\a)n\ge 3|B|$. %Note that for some $v\in B\setminus A$ it is possible that it was chosen as $u_w$ for some $w\in A$.
	
Let $H$ be the graph $G-B$ with any edges of colour $c_v$ removed for each $v\in B$. Any matching has at most $\a n$ edges incident to $B$. Thus, any $6\a$-rich class $U$ of colours in $G$ is a $5\a$-rich class of colours in $G-B$. Furthermore, if such a class $U$ contains no colour $c_v$, $v\in B$, then $U$  is a $5\a$-rich class of colours in $H$. As the colours of $G$ can be partitioned into $n-1$ classes that are each $5\a$-rich, the colours of $H$ have an (inherited) partition into $n-1-|B|=|H|-1$ classes that are $5\a$-rich. Therefore, $H$ is $5\a$-well-coloured. 

Now, as each vertex in $G$ has at most $\a n$ non-neighbours, and we removed edges of $\a n$ colours from $G-B$ to get $H$, $\delta(H)\ge |H|-2\a n\geq (1-5\a/2)|H|$. Hence, by Lemma~\ref{embed1}, $H$ has a rainbow spanning tree, $T$ say.
Adding the edges $u_vv$, $v\in B$, to $T$, we get a rainbow spanning tree of $G$ with maximum degree at most $(|T|-1)+1\leq |H|= (1-\a)n$ in which each vertex in $A$ is a leaf, as desired.
\end{proof}

\begin{proof}[Proof of Lemma~\ref{wellcoll2}] Note the lemma is vacuous unless $\ell\geq 1$, so we may assume $n\geq 2500/\a^2$.
	Greedily, find edge-disjoint rainbow trees $T_1,\ldots, T_{\ell}$ under the rules that, for each $1\leq i\leq \ell$, 
	\begin{itemize}
		\item $\Delta(T_i)\leq (1-\alpha/6)n$,
		\item every vertex in $F_{i}:=\cup_{j<i}T_{j}$ with degree at least $\alpha n/12$ is a leaf in $T_i$, and
                \item $T_i$ is a spanning tree of $G_i$.
	\end{itemize}
	
	This is possible, as follows. Let $1\le i\le \ell$, and suppose we have found $T_1,\ldots,T_{i-1}$. Let $H_i:=G_i-F_i$. From the rules above, each vertex in $F_i$ has degree at most 
$\alpha n/12+(1-\alpha/6)n+\ell \le  (1-\alpha/15)n$. Thus, $\delta(H_i)\geq (1-\a/30)n-(1-\alpha/15)n= \alpha n/30$. Let $\beta=\a/90$, so that $\delta(H_i)\geq 3\beta n$.

As $G_i$ is $(\alpha,\ell)$-robustly-coloured, $H_i$ is $\alpha$-well-coloured. Let $A_i\subseteq V(G_i)$ be the set of vertices in $G_i$ with degree at least $\alpha n/12$ in $F_i$. Then, recalling that $\ell\leq \alpha^2 n/2500$,
	$$|A_i|\leq 2\ell n/(\alpha n/12)\leq \a n/90=\beta n\leq \delta(H_i)/3.$$
Furthermore, $\delta(H_i- A_i)\ge (1-\a/30)n-|A|-\alpha n/12\ge (1-\alpha/6)n$. Applying Lemma~\ref{treecor} with $\alpha/6$, $\beta$, $H_i$ and $A_i$ playing the roles of $\alpha$, $\beta$, $G$ and $A$  gives the required rainbow spanning tree $T_i$ of $H_i$.
\end{proof}

%%%%%%%%%%%%%%%%%%%%%%%%%%%%%%%%%%%%%%%%%%%%%%%%%%%%%%%%%%%%%%%%%%%%%%%%%%%%%%%%%%%%%%%%%%%%%%%%%%%%%%%
%%%%%%%%%%%%%%%%%%%%%%%%%%%%%%%%%%%%%%%%%%%%%%%%%%%%%%%%%%%%%%%%%%%%%%%%%%%%%%%%%%%%%%%%%%%%%%%%%%%%%%%
%%%%%%%%%%%%%%%%%%%%%%%%%%%%%%%%%%%%%%%%%%%%%%%%%%%%%%%%%%%%%%%%%%%%%%%%%%%%%%%%%%%%%%%%%%%%%%%%%%%%%%%
%%%%%%%%%%%%%%%%%%%%%%%%%%%%%%%%%%%%%%%%%%%%%%%%%%%%%%%%%%%%%%%%%%%%%%%%%%%%%%%%%%%%%%%%%%%%%%%%%%%%%%%

\section{Rainbow trees in graphs with many rich colours}\label{sec-almost}

Where $K_n$ has $n-1-r$ many rich colours and $r$ is small, we need to ensure that the iterative removal of rainbow spanning trees does not remove all the non-rich colours. We do this by first finding many edge-disjoint rainbow matchings of $r$ edges using the non-rich colours, reserving one for each rainbow spanning tree we subsequently find. We find the matchings using the following proposition and lemma, before proving Lemma~\ref{embed-many-orig-rich}.

\begin{prop}\label{findmatch}
Let $r,n\in\N$. Any properly edge-coloured $n$-vertex graph $G$, with at least $rn/3$ edges, maximum degree at most $n/10$ and with no $(n/10)$-rich colour, contains a rainbow matching with $r$ edges.
\end{prop}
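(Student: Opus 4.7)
The plan is a greedy extension argument: take a maximum rainbow matching $M$ in $G$, assume for contradiction that $|M|<r$, and show there is still some edge available with a new colour and disjoint from $V(M)$ that could be added to $M$.

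First I would bound the three relevant quantities separately. Let $m=|M|$ and suppose $m\le r-1$. The set $V(M)$ has $2m$ vertices, so since $\Delta(G)\le n/10$, the number of edges of $G$ meeting $V(M)$ is at most $2m\cdot n/10\le rn/5$. Next, because $G$ has no $(n/10)$-rich colour, every colour class has fewer than $n/10$ edges (its matching number is less than $n/10$, but in a proper edge-colouring each colour class already is a matching). Hence the edges of $G$ whose colour is used by some edge of $M$ number at most $m\cdot n/10\le rn/10$.

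Combining these, the number of edges of $G$ that are both disjoint from $V(M)$ and coloured by a colour not appearing in $M$ is at least
\[
|E(G)|-\frac{rn}{5}-\frac{rn}{10}\;\ge\;\frac{rn}{3}-\frac{rn}{5}-\frac{rn}{10}\;=\;\frac{rn}{30}\;>\;0.
\]
Any such edge could be added to $M$ to give a larger rainbow matching, contradicting the maximality of $M$. Therefore $|M|\ge r$, as claimed.

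The whole argument is essentially arithmetic, so the only obstacle is choosing the numerical constants $n/3$, $n/10$, $n/10$ and verifying that the three budgets above leave a positive surplus; the computation $1/3-1/5-1/10=1/30>0$ confirms that the hypotheses are tight enough. No case analysis, no absorbing, and no appeal to Theorem~\ref{rain} is needed here, so the proposition really is a short lemma used purely to set up the reserved matchings in Lemma~\ref{embed-many-orig-rich}.
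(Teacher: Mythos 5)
Your argument is correct and is essentially the paper's proof: both take a maximal (maximum) rainbow matching $M$ with $|M|<r$, observe that every edge of $G$ either meets $V(M)$ (at most $2|M|\cdot n/10$ edges, by the degree bound) or repeats a colour of $M$ (at most $|M|\cdot n/10$ edges, since each colour class is a matching of fewer than $n/10$ edges), and derive a contradiction with $|E(G)|\ge rn/3$. The only cosmetic difference is that you count a positive surplus of extendable edges while the paper bounds the total number of edges; the content is identical.
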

\begin{proof} Pick a maximal rainbow matching $M$ in $G$. Say $M$ has fewer than $r$ edges, for otherwise we are done. Note that any edge with no vertex in $V(M)$ must have some colour from $M$. Therefore, $G$ has at most $|E(M)|\cdot n/10+|V(M)|\cdot n/10< rn/3$ edges, a contradiction.
\end{proof}

\begin{lemma}\label{findmatch2}
Let $r,n\in\N$ satisfy $r\leq n/100$. Any properly edge-coloured $n$-vertex graph $G$ with minimum degree at least $r$ contains $n/100$ edge-disjoint rainbow matchings with $r$ edges.
\end{lemma}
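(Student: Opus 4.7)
The plan is to build the $n/100$ rainbow matchings one at a time. For $i\le n/100$ let $G_i:=G\setminus\bigcup_{j<i}M_j$; since each previously chosen matching removes exactly $r$ edges,
\[
|E(G_i)|\;\ge\;|E(G)|-(i-1)r\;\ge\;\tfrac{rn}{2}-\tfrac{rn}{100}\;\ge\;\tfrac{49rn}{100}\;\ge\;\tfrac{rn}{3},
\]
so the edge-count hypothesis of Proposition~\ref{findmatch} is automatically met in $G_i$ throughout the process.

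To produce $M_i$ I would combine Proposition~\ref{findmatch} with a short case analysis on the colour structure of $G_i$. In the first case, $G_i$ contains at least $r$ colour classes each of size $\ge n/10$; then $M_i$ is built greedily by choosing one edge from each of $r$ such rich classes. At step $j\le r$ the partial matching uses $\le 2(j-1)\le 2r-2\le n/50$ vertices, so each rich class (which, being a colour class of a properly edge-coloured graph, is a matching of $\ge n/10$ edges) still offers at least $n/10-n/50>0$ admissible edges. In the second case, $G_i$ has fewer than $r$ rich colour classes; here I would pass to a subgraph $H_i\subseteq G_i$ obtained by deleting all edges in the rich colour classes and trimming edges at vertices of degree exceeding $n/10$ in $G_i$. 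By construction $\Delta(H_i)\le n/10$ and no colour of $H_i$ is $(n/10)$-rich, so Proposition~\ref{findmatch} applied to $H_i$ yields the desired $M_i\subseteq H_i\subseteq G_i$, provided the edge count has not dropped too far.

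The main obstacle is verifying $|E(H_i)|\ge rn/3$ in this second case. Removing fewer than $r$ rich colour classes costs at most $(r-1)n/2$ edges, which is uncomfortably close to the budget, and bounding the loss from the high-degree trimming requires a careful double-count of the total excess degree. The whole argument turns on the comfortable gap between the guaranteed $49rn/100$ edges in $G_i$ and the required $rn/3$, together with the hypothesis $r\le n/100$ keeping absolute counts of rich classes and high-degree vertices in check. This bookkeeping is the technical heart of the proof and is where I would invest the most care.
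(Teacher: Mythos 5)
Your Case 1 is fine, but Case 2 contains a genuine gap that no amount of edge-count bookkeeping can close, because the strategy itself fails: after deleting all edges of the (fewer than $r$) rich colour classes and trimming at high-degree vertices, the remainder need not contain \emph{any} matching of size $r$, let alone $rn/3$ edges. Concretely, take $n$ odd, let $G$ consist of $r-1$ edge-disjoint perfect matchings of $G-u$ in colours $1,\dots,r-1$ together with the full star at $u$ whose $n-1$ edges get $n-1$ new distinct colours. This is properly coloured with $\delta(G)\ge r$, it has exactly $r-1$ rich colour classes (so your Case 1 does not apply), and the non-rich edges form a star, whose matching number is $1$; moreover your trimming step at the high-degree vertex $u$ deletes the star entirely. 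A rainbow $r$-matching does exist in $G$, but it must take an edge from almost every rich class, which your Case 2 has forbidden itself from doing. Your own worry about the budget ($(r-1)n/2$ deleted edges against $49rn/100$ available) is a symptom of this: the deficit is not a technicality, the quantity $|E(H_i)|\ge rn/3$ is simply the wrong target.

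The missing idea, which is how the paper proceeds, is to \emph{harvest} the rich colours and high-degree vertices rather than discard them. Pick maximally many $(1/20)$-rich colours $C_1,\dots,C_s$ with $s\le r$, then maximally many vertices of degree at least $n/20$ in the remaining graph with $s+s'\le r$, and set $t=r-s-s'$. Because the colouring is proper, deleting $s$ colour classes lowers each degree by at most $s$, so the residual graph $G''$ has $\delta(G'')\ge t$, hence at least $t|G''|/2$ edges; now Proposition~\ref{findmatch} is applied only for matchings of the \emph{residual} size $t$, for which $t|G''|/2 - t\ell$ edges amply suffice, and by maximality $G''$ has no $(n/10)$-rich colour and maximum degree below $n/20$. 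Each of the $n/100$ small matchings is then extended greedily by one edge at each reserved high-degree vertex and one edge of each reserved rich colour $C_j$ (as in your Case 1, there are always $n/20-O(n/100)>0$ admissible choices, keeping the matchings rainbow and pairwise edge-disjoint). The point is that the edge/matching requirement must scale with $t$, not with $r$; your dichotomy forces the full $r$ onto the colour-poor part of the graph, where it may be unobtainable.
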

\begin{proof} Let $\ell=n/100$. Let $C_1,\ldots,C_{s}$ be maximally many $(1/20)$-rich colours in $G$ subject to $s\leq r$, and let $G'$ be the graph $G$ with all edges of these colours removed. Let $v_1,\ldots, v_{s'}$ be maximally many vertices in $G'$ with degree at least $n/20$ subject to $s+s'\leq r$. Let $G''=G'-\{v_1,\ldots,v_s\}$ and $t=r-s-s'\geq 0$. As $\delta(G)\geq r$, $\delta(G'')\geq t$. Greedily, pick edge-disjoint rainbow matchings $M_1,\ldots, M_\ell$ with $t$ edges in $G''$. This is possible, as, for each $0\leq i<\ell$, $G''-M_1-\ldots -M_i$ has at least $t|G''|/2-t\cdot \ell\geq t|G'|/3$ edges, and thus a suitable matching $M_{i+1}$ exists by Proposition~\ref{findmatch}.

Now, greedily, for each $M_i$, add $s'$ edges to $M_i$ by adding, for each $v_j$, an edge $xv_j$ in $E(G')$ with $x\notin V(M_i)$ such that $M_i$ remains rainbow and the matchings $M_1,\ldots, M_\ell$ remain edge-disjoint. For each vertex $v_j$ and matching $M_i$, when we seek to find $x$, at most $\ell$ neighbouring edges of $v_j$ are in the other matchings, at most $2\ell$ neighbours of $v_j$ are in $V(M_i)$, and $M_i$ has at most $\ell$ colours. Thus, there will be at least $n/20-4\ell>0$ choices for $x$. This shows that the greedy process can extend the matchings as described.

Now, greedily, for each $M_i$, add $s$ edges to $M_i$ by adding an edge of each colour $C_j$ such that $M_i$ remains rainbow and the matchings $M_1,\ldots, M_\ell$ remain edge-disjoint. Similarly to the argument above, we will always have at least $n/20-3\ell>0$ choices to add an edge of each colour $C_j$ to each $M_i$. The resulting matchings then satisfy the lemma.
\end{proof}

\begin{proof}[Proof of Lemma~\ref{embed-many-orig-rich}] Let $\ell=\a^2 n/10^6$. Let $C_1,\ldots,C_s$ be the $\alpha$-rich colours of $K_n$, so that $s\geq n-\a n/150$. Let $R\subset K_n$ be the graph with edges without an $\a$-rich colour. Let $r=\max\{n-1-s,0\}\leq \a n/150$. As $\delta(R)\ge r$, by Lemma~\ref{findmatch2}, we can find $\ell$ edge-disjoint rainbow matchings with $r$ edges, $M_1, \ldots,M_\ell$ say. For each $1\le i\le \ell$, let $Z_i\subseteq V(M_i)$ contain an arbitrary vertex from each edge in $M_i$, let $X_i=V(K_m)\setminus Z_i$ and let $U_i$ be the colours of $K_n$ which do not appear in $M_i$.

Let $G=K_n-M_1-\ldots-M_\ell$, so that $\delta(G)\geq n-1-\ell$. For each $1\leq i\leq \ell$, let $G_i=G_{U_i}[X_i]$. Each $\alpha$-rich colour in $K_n$ has all but at most $|Z_i|\leq \a n/150$ of its edges in $K_n-Z_i$, and, given any $\ell$ rainbow forests in $G_i$, at most $\ell\leq\a n/4$ edges of each colour can appear in the forests. Thus, $G_i$ is $(\a n/2,\ell)$-robustly-coloured.

Furthermore, $G_i$ has minimum degree at least $n-1-\ell-2|Z_i|\geq (1-\alpha/60)n$. Therefore, by Lemma~\ref{wellcoll2}, $G$ contains edge-disjoint rainbow trees $T_1$, \ldots, $T_\ell$ so that $T_i$ is a spanning tree of $G_i$. Then, $M_i\cup T_i$, $1\leq i\leq \ell$, is a collection of $\ell$ edge-disjoint rainbow spanning trees in $K_n$.
\end{proof}

%%%%%%%%%%%%%%%%%%%%%%%%%%%%%%%%%%%%%%%%%%%%%%%%%%%%%%%%%%%%%%%%%%%%%%%%%%%%%%%%%%%%%%%%%%%%%%%%%%%%%%%
%%%%%%%%%%%%%%%%%%%%%%%%%%%%%%%%%%%%%%%%%%%%%%%%%%%%%%%%%%%%%%%%%%
%%%%%%%%%%%%%%%%%%%%%%%%%%%%%%%%%%%%%%%%%%%%%%%%%%%%%%%%%%%%%%%%%%%%%%%%

\end{document}